 \DeclareMathOperator{\diver}{div}
\newcommand{\field}[1]{\mathbb{#1}}
\newcommand{\G}{\field{G}}                      
\newcommand{\R}{\field{R}}                      
\newcommand{\Sph}{\field{S}}                    
\newcommand{\Heis}{\field{H}}                    
\newcommand{\fg}{{\mathfrak g}}
\newcommand{\cH}{{\mathcal H}}
\newcommand{\fh}{{\mathfrak h}}
\newcommand{\cL}{{\mathcal L}}
\newcommand{\fv}{{\mathfrak v}}
\newcommand{\ta}{{\tt a}}
\newcommand{\te}{{\tt e}}
\newcommand{\tg}{{\tt g}}
\renewcommand{\th}{{\tt h}}
\newcommand{\tx}{{\tt x}}
\newcommand{\boldN}{{\mathbf N}}
\newcommand{\deriv}[1]{{\frac{\partial}{\partial #1}}}
\newcommand{\bi}{{\mathbf i}}
\def\Barint_#1{\mathchoice
          {\mathop{\vrule width 6pt height 3 pt depth -2.5pt
                  \kern -8pt \intop}\nolimits_{#1}}%
          {\mathop{\vrule width 5pt height 3 pt depth -2.6pt
                  \kern -6pt \intop}\nolimits_{#1}}%
          {\mathop{\vrule width 5pt height 3 pt depth -2.6pt
                  \kern -6pt \intop}\nolimits_{#1}}%
          {\mathop{\vrule width 5pt height 3 pt depth -2.6pt
                  \kern -6pt \intop}\nolimits_{#1}}}
\theoremstyle{plain}
\newtheorem{theorem}{Theorem}
\newtheorem{corollary}[theorem]{Corollary}
\newtheorem{lemma}[theorem]{Lemma}
\theoremstyle{definition}
\newtheorem{definition}[theorem]{Definition}
\newtheorem{example}[theorem]{Example}
\newtheorem{remark}[theorem]{Remark}
\numberwithin{theorem}{section} \numberwithin{equation}{section}
\title{Polar coordinates in Carnot groups II}
\date{\today}
\author{Jeremy T. Tyson}
\address{Department of Mathematics \\ University of Illinois at Urbana-Champaign \\ 1409 West Green Street \\ Urbana, IL 61801}
\email{tyson@illinois.edu}
\begin{document}
\maketitle

\begin{abstract}
A Carnot group is polarizable if it carries a homogeneous norm whose powers are fundamental solutions for the $p$-sub-Lapacian operators for all $1<p\le \infty$. Such groups also support a system of horizontal polar coordinates. We prove that the converse statement is true: if a Carnot group supports a horizontal polar coordinate system with suitable properties, then it is polarizable.
\end{abstract}

\section{Introduction}

Carnot groups are nilpotent stratified Lie groups equipped with a sub-Riemannian geometric and analytic structure arising from the first (lowest) layer of the stratification of the Lie algebra. Carnot groups arise as local models for sub-Riemannian manifolds, and also serve as canonical non-smooth examples in the theory of analysis in metric measure spaces.

In this paper, we focus on a specific class of Carnot groups whose structure allows for explicit computations of sharp constants associated to first-order nonlinear potential theory, quasiconformal mappings, and Sobolev inequalities. These Carnot groups, introduced in \cite{bt:polar}, were named `polarizable' groups in view of a polar coordinate integration formula involving horizontal radial curves. This polar coordinate formula can be used, for instance, to compute explicit values for the moduli of spherical ring domains, which in turn yields sharp H\"older continuity exponents for quasiconformal maps. At the same time, these groups admit an explicit one-parameter family of fundamental solutions for the $p$-Laplace operators. In this note, we explore different equivalent definitions for the class of polarizable groups.

We denote by $\G$ a Carnot group of step $\iota$, with stratified Lie algebra $\fg = \fv_1 \oplus  \cdots \oplus \fv_\iota$. See section \ref{sec:background} for definitions and notation. The sub-Riemannian structure arises from a fixed inner product defined in the first (horizontal) layer $\fv_1$ of the Lie algebra. We fix an orthonormal basis $X_1,\ldots,X_m$ for $\fv_1$ and denote by $\nabla_0 u = (X_1 u, \ldots, X_m u)$ the horizontal gradient of a function $u$. Denoting by $\diver_0 V = X_1(a_1) + \cdots + X_m(a_m)$ the horizontal divergence of a horizontal vector field $V = a_1 \, X_1 + \cdots + a_m \, X_m$, we consider the {\it horizontal Laplacian}
\begin{equation}\label{eq:horiz-Laplacian}
\cL u = \diver_0(\nabla_0 u) = \sum_{i=1}^m X_i^2 u
\end{equation}
and the {\it horizontal $\infty$-Laplacian}
\begin{equation}\label{eq:horiz-infty-Laplacian}
\cL_\infty u = \tfrac12 \langle \nabla_0 ( || \nabla_0 u ||_0^2 ) , \nabla_0 u \rangle_0.
\end{equation}
Here $\langle\cdot,\cdot\rangle_0$ denotes the inner product in the horizontal tangent spaces, while $||\cdot||_0$ denotes the corresponding norm.

Carnot groups of step one are Euclidean spaces. In this case the horizontal distribution coincides with the full tangent bundle. The horizontal Laplacian and horizontal $\infty$-Laplacian are just the standard Laplacian $\triangle u = \sum_j \partial_j \partial_j u$ and $\infty$-Laplacian $\triangle_\infty u = \sum_{j,k} \partial_j \partial_k u \, \partial_j u \, \partial_k u$. It is well known that the fundamental solution for the Laplacian in $\R^n$, $n \ge 3$, is a constant multiple of $|x|^{2-n}$, where $|\cdot|$ denotes the Euclidean norm. At the same time, $\triangle_\infty(|\cdot|) = 0$ in the complement of $0 \in \R^n$. The $p$-Laplace operators $\triangle_p$, $1<p<\infty$, interpolate between the Laplacian and $\infty$-Laplacian, and extrapolate to the range $1<p<2$. For each $1<p<\infty$, the operator $\triangle_p$ admits a fundamental solution which is proportional to a suitable power of $|\cdot|$ (when $p=n$ this is replaced by $\log(1/|\cdot|)$).

The preceding story extends to other Carnot groups. By results of Folland \cite{fol:explicit},  Heinonen--Holopainen \cite{hh:carnot}, and Capogna--Danielli--Garofalo \cite{cdg:carnot}, fundamental solutions for the horizontal $p$-Laplacians on the Heisenberg group are given by suitable powers (or the logarithm) of the Kor\'anyi norm, and the Kor\'anyi norm itself is a solution to the horizontal $\infty$-Laplace equation in the complement of the origin. Moreover, the same story persists for Kaplan's H-type groups, a more general class of Carnot groups which retains many nice algebraic and analytic properties of the Heisenberg groups.


As was first observed by Kor\'anyi and Reimann \cite{kr:rings}, the Heisenberg group $\Heis^n$ carries a system of {\it horizontal polar coordinates}. Kor\'anyi and Reimann used this system to compute the conformal capacities of ring domains, with concomitant implications for the regularity of quasiconformal maps. The paper \cite{bt:polar} introduced the class of polarizable Carnot groups as the class of Carnot groups in which the appropriate power of Folland's fundamental solution $u$ is annihilated by $\cL_{\infty}$ in the complement of the origin. Such groups admit an explicit one-parameter family of fundamental solutions for all of the horizontal $p$-Laplacians, and also a horizontal polar coordinate integration formula. Applications to explicit values for conformal capacities of spherical ring domains and regularity of quasiconformal mappings were given in \cite{bht:singular} and \cite{bmv:modules}. Other uses of horizontal polar coordinates in Carnot groups include sharp constants in Hardy-type inequalities \cite{gv:hardy}, extremality for quasiconformal mappings and mappings with controlled dilatation \cite{bfp:teichmuller}, boundary behavior of quasiconformal mappings \cite{aw:prime-ends}, and Hardy spaces in the Heisenberg group \cite{af:hardy}. Recently, Koskela and Nguyen \cite{kn:polar} introduced an abstract notion of rectifiable polar coordinate decomposition in metric measure spaces, with applications to the asymptotic behavior of homogeneous Sobolev functions. 

The main theorem of this note resolves an axiomatic question left unaddressed in \cite{bt:polar}. We denote by $u$ the fundamental solution for the Laplacian $\cL$ defined in \eqref{eq:horiz-Laplacian}. By a result of Folland \cite{fol:subelliptic}, such a fundamental solution exists in any Carnot group, moreover, $u$ is positive and finite in $\G \setminus \{ 0\}$ and $u$ is homogeneous of order $2-Q$ with respect to the dilation semigroup, where $Q$ denotes the homogeneous dimension of $\G$. Consequently, $\boldN_u := u^{1/(2-Q)}$ is a homogeneous quasi-norm on $\G$, i.e., it is homogeneous of order one. We say that $\G$ {\it admits a coherent family of singular solutions to the horizontal $p$-Laplacians for all $1<p\le\infty$} if a suitable power of $\boldN_u$ is a fundamental solution for the $p$-Laplacian
\begin{equation}\label{eq:horiz-p-Laplacian}
\cL_p f = \diver_0(||\nabla_0 f||_0^{p-2} \, \nabla_0 f)
\end{equation}
for each $p \in (1,\infty]$ (when $p=Q$ we use instead the logarithm of $\boldN_u$). We say that $\G$ {\it admits horizontal polar coordinates with respect to $\boldN_u$} if an open subset of full measure in $\G$ is foliated by a family of horizontal curves 
$$
\gamma_\ta:(0,\infty) \to \G, \qquad \ta \in S := \{ \boldN = 1 \}
$$
satisfying certain additional properties, and there exists a Radon measure $\sigma$ on $S$ so that the integration formula
$$
\int_\G f(g) \, dg = \int_S \int_0^\infty f(\gamma_\ta(s)) \, s^{Q-1} \, ds \, d\sigma(\ta)
$$
holds for all $f \in L^1(\G)$. Precise definitions for these two concepts appear in subsection \ref{subsec:polarizable}. With this terminology in place we state the following theorem.

\begin{theorem}\label{th:main}
Let $\G$ be a Carnot group of homogeneous dimension $Q>2$. Let $\boldN_u$ denote Folland's homogeneous quasi-norm. Then the following are equivalent.
\begin{itemize}
\item[(i)] $\cL_\infty(\boldN_u) = 0$ in the complement of the origin $0$,
\item[(ii)] $\G$ admits a coherent family of singular solutions to the horizontal $p$-Laplacians for all $1<p\le\infty$,
\item[(iii)] $\G$ admits horizontal polar coordinates with respect to $\boldN_u$.
\end{itemize}
\end{theorem}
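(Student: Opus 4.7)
The plan is to establish the four implications (i)$\Rightarrow$(ii), (i)$\Rightarrow$(iii), (ii)$\Rightarrow$(i), and (iii)$\Rightarrow$(i). The first two are (in essence) carried out in \cite{bt:polar}; the genuinely new content lies in the two reverse implications, and especially in (iii)$\Rightarrow$(i).

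For both forward implications the workhorse is the identity
$$\boldN_u\,\cL\boldN_u\;=\;(Q-1)\,||\nabla_0\boldN_u||_0^2,$$
which is unconditional in any Carnot group and equivalent to Folland's $\cL(\boldN_u^{2-Q})=0$. For (i)$\Rightarrow$(ii) I would expand $\cL_p(\boldN_u^\alpha)$ via the product rule, observe that under (i) the $\cL_\infty\boldN_u$ contributions drop, and check that the remaining expression vanishes precisely when $\alpha=(p-Q)/(p-1)$ (with a logarithmic substitution when $p=Q$). For (i)$\Rightarrow$(iii) I would set $V:=\nabla_0\boldN_u/||\nabla_0\boldN_u||_0^2$, a horizontal vector field on $\G\setminus\{0\}$ satisfying $V\boldN_u=1$; reparameterizing its flow by $\boldN_u$-value produces horizontal curves $\gamma_\ta:(0,\infty)\to\G$ emanating from $\ta\in S=\{\boldN_u=1\}$ with $\boldN_u(\gamma_\ta(s))=s$. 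A short product-rule computation gives
$$\diver_0 V\;=\;\frac{\cL\boldN_u}{||\nabla_0\boldN_u||_0^2}\;-\;\frac{2\,\cL_\infty\boldN_u}{||\nabla_0\boldN_u||_0^4},$$
which under Folland's identity and (i) reduces to $(Q-1)/\boldN_u$; the Jacobian $J(\ta,s)$ of $(\ta,s)\mapsto\gamma_\ta(s)$ therefore satisfies $\partial_s\log J=(Q-1)/s$, producing the weight $s^{Q-1}$ in the polar integration formula.

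The implication (ii)$\Rightarrow$(i) is immediate from the $p=\infty$ case of the coherent family: as $p\to\infty$ the natural exponent $(p-Q)/(p-1)\to 1$, so that $\boldN_u$ itself is required to be $\infty$-harmonic off the origin, which is precisely (i).

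The main remaining implication is (iii)$\Rightarrow$(i), which I would prove by running the Jacobian argument in reverse. Using the axioms for horizontal polar coordinates stated in subsection \ref{subsec:polarizable}, I would first identify each $\gamma_\ta$ with an integral curve of $V$: horizontality together with the normalization $\boldN_u(\gamma_\ta(s))=s$ forces $\langle\gamma_\ta'(s),\nabla_0\boldN_u\rangle=1$, and the horizontal radial character of the curves (part of those additional axioms) forces $\gamma_\ta'(s)$ to be parallel to $\nabla_0\boldN_u$, hence equal to $V|_{\gamma_\ta(s)}$. The weight $s^{Q-1}$ then translates, via $\partial_s\log J=\diver_0 V|_{\gamma_\ta(s)}$, into $\boldN_u\,\diver_0 V=Q-1$; subtracting Folland's identity $\boldN_u\,\cL\boldN_u=(Q-1)||\nabla_0\boldN_u||_0^2$ leaves exactly $\cL_\infty\boldN_u=0$. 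I expect the principal obstacle to lie in that first step: extracting from the axioms of subsection \ref{subsec:polarizable} enough structure to pin $\gamma_\ta'$ down as $V|_{\gamma_\ta(s)}$ rather than as some other horizontal vector field meeting $\langle\,\cdot\,,\nabla_0\boldN_u\rangle=1$, after which the divergence computation closes the argument with no further effort.
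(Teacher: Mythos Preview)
Your proposal is correct and follows essentially the same route as the paper. The paper likewise cites \cite{bt:polar} for (i)$\Rightarrow$(ii)$\Rightarrow$(iii), treats (ii)$\Rightarrow$(i) as trivial, and proves (iii)$\Rightarrow$(i) by the reverse Jacobian argument you outline: identify the polar curves with integral curves of the radial field, read off a divergence identity from the $s^{Q-1}$ weight, and subtract Folland's identity $\boldN_u\,\cL\boldN_u=(Q-1)||\nabla_0\boldN_u||_0^2$ to isolate $\cL_\infty\boldN_u$.

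One small technical point where the paper is slightly more careful than your sketch: rather than working with the Jacobian of the map $(\ta,s)\mapsto\gamma_\ta(s)$ from $S\times(0,\infty)$ into $\G$ (which would require relating the abstract Radon measure $\sigma$ to a surface measure before invoking Liouville's formula), the paper extends the polar curves to a multiplicative flow $\varphi(s,\tg):=\gamma_\ta(st)$ on all of $\G\setminus Z$, where $\tg=\gamma_\ta(t)$. The polar integration formula then directly yields $\det D_\th\varphi(t,\th)=t^Q$ by a change of variables, and Liouville's formula applied to this full-space flow gives $\diver_0\bigl(\boldN_u\,V\bigr)=Q$, equivalent to your $\boldN_u\,\diver_0 V=Q-1$. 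This sidesteps any issue about the structure of $\sigma$. The ``principal obstacle'' you anticipate (pinning down $\gamma_\ta'$ as $V$) is handled exactly as you suggest, via axioms (i)--(ii) of Definition~\ref{def:hpc}; axiom (iii) is not actually used in the argument, and the paper remarks that it is unclear whether it is needed.
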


Consequently, a Carnot group $\G$ is polarizable if and only if it satisfies any one of the equivalent conditions in Theorem \ref{th:main}. In \cite{bt:polar}, condition (i) was used as the definition of polarizability.

The implication (ii) $\Rightarrow$ (i) is trivial, and the implications (i) $\Rightarrow$ (ii) $\Rightarrow$ (iii) were proved in \cite{bt:polar}. It suffices for us to prove that (iii) $\Rightarrow$ (i). We will do this in section \ref{sec:proof1}.

It is natural to ask how extensive is the class of polarizable groups. In particular, {\it are there polarizable groups which are not of H-type?} We conjecture that the answer is no. In a companion paper \cite{tys:h-type-stability} we propose a new approach towards this conjecture. We introduce a numerical measurement, denoted $\delta_0(\G)$, for a step two Carnot group $\G$ which quantifies the extent to which the group deviates from the class of H-type groups. That is, $\delta_0(\G) = 0$ if and only if $\G$ is of H-type.

A result of Beals, Gaveau and Greiner provides an explicit integral representation for Folland's fundamental solution in any step two group. We can therefore express the horizontal $\infty$-Laplacian of the corresponding homogeneous norm explicitly as an iterated integral involving first and second horizontal derivatives of the integrand in the Beals--Gaveau--Greiner formula. The group is polarizable if the resulting expression is identically equal to zero, that is, if
\begin{equation}\label{eq:infty-Lap-equation}
(\cL_{\infty}\boldN_u)(\tg) = 0 \qquad \forall \, \tg \in \G, \tg \ne \te.
\end{equation}
One potential approach to the above conjecture, in the setting of groups of step two, would be to control $\delta_0(\G)$ from above by a suitable expression involving the values of $(\cL_{\infty}\boldN_u)(\tg)$ for certain points $\tg \in \G$, so that the upper bound vanishes if \eqref{eq:infty-Lap-equation} is satisfied. We investigate this idea in more detail in \cite{tys:h-type-stability}, where we formulate a conjecture along these lines and verify this conjecture for a specific infinite family of step two Carnot groups. In \cite{tys:h-type-stability} we also use a similar method  to establish several other new analytic characterizations for the class of H-type groups.

\smallskip

\paragraph{\bf Acknowledgements.} The author acknowledges support from the Simons Foundation under grant \#852888, `Geometric mapping theory and geometric measure theory in sub-Riemannian and metric spaces'. In addition, this material is based upon work supported by and while the author was serving as a Program Director at the National Science Foundation. Any opinion, findings, and conclusions or recommendations expressed in this material are those of the author and do not necessarily reflect the views of the National Science Foundation.

\section{Background, notation and definitions}\label{sec:background}

Let $\G$ be a Carnot group with Lie algebra
$$
\fg = \fv_1 \oplus \cdots \oplus \fv_\iota \simeq \R^N
$$
and sub-Riemannian metric $\langle \cdot,\cdot \rangle_0$ on $\fv_1$ associated to a fixed orthonormal basis $X_1,\ldots,X_m$ for $\fv_1$. Denote by $d_{cc}$ the Carnot--Carath\'eodory metric induced by $\langle \cdot,\cdot \rangle_0$, and let $|\tg|_{cc} := d_{cc}(\tg,\te)$, $\tg \in \G$, where $\te$ denotes the identity element of $\G$.

Denote by
$$
\cL= X_1^2 + \cdots + X_m^2
$$
the sub-Laplacian. By a result of Folland \cite{fol:subelliptic}, $\cL$ admits a fundamental solution $u$. By H\"ormander's hypoellipticity criterion, $u$ is $C^\infty$ in $\G \setminus \{\te\}$. Furthermore, $\lim_{\tg \to \te} u(\tg) = +\infty$, $\lim_{|\tg|_{cc} \to \infty} u(\tg) = 0$, and $u$ is homogeneous of order $2-Q$ with respect to the standard dilations $(\delta_\lambda)$.

Recall that $\delta_\lambda:\G\to\G$ is the dilation with scale factor $\lambda>0$, defined on the level of the Lie algebra by $\delta_\lambda|_{\fv_j} = \lambda^j$. The Jacobian determinant of $\delta_\lambda$ is constant and equal to $\lambda^Q$, where
$$
Q = \sum_{j=1}^\iota j \, \dim \fv_j
$$
is the {\it homogeneous dimension} of $\G$.

Let $\mu$ be Haar measure in $\G$; it coincides (up to a multiplicative constant) with Lebesgue measure $\cL^N$ in $\G\simeq\R^N$ or with the Carnot--Carath\'eodory $Q$-dimensional Hausdorff measure $\cH^Q_{cc}$.

The order $(2-Q)$-homogeneity of Folland's fundamental solution $u$ means that
$$
u \circ \delta_\lambda = \lambda^{2-Q} u
$$
for all $\lambda>0$. Define
$$
\boldN_u:= u^{1/(2-Q)};
$$
this is {\it Folland's homogeneous quasi-norm} on $\G$. It is homogeneous of order one, i.e., $\boldN_u \circ \delta_\lambda = \lambda \boldN_u$ for all $\lambda>0$. It may not be a true norm on $\G$, but it is at least a quasinorm, i.e., there exists a constant $C>0$ such that $\boldN_u(\tg*\th) \le C(\boldN_u(\tg)+\boldN_u(\th))$ for all $\tg,\th \in \G$.

\begin{example}[Folland, \cite{fol:explicit}]\label{ex:Folland-Heisenberg}
Denote by $\Heis^n$ the $n$th Heisenberg group. The underlying space is $\R^{2n+1}$, with coordinates $(x,t) = (x_1,\ldots,x_{2n},t)$ and group law $(x,t)*(x',t') = (x+x',t+t'+2\sum_{j=1}^n (x_jx_{n+j}'-x_{n+j}x_j')$. The Lie algebra $\fh_n$ is stratified as $\fv_1\oplus\fv_2$, where $\dim\fv_1 = 2n$ and $\dim\fv_2 = 1$. A basis of left-invariant vector fields for $\fv_1$ is given by
$$
X_j = \deriv{x_j} + 2x_{n+j} \deriv{t} \qquad \mbox{and} \qquad X_{n+j} = \deriv{x_{n+j}} - 2x_j \deriv{t}, \quad j=1,\ldots,n
$$
and we record the commutation relations
$$
[X_j,X_{n+j}] = -4T, \qquad j=1,\ldots,n ,
$$
where $T = \deriv{t}$. For a suitable choice of $c(n)>0$, the function
$$
u(x,t) = \frac{c(n)}{(||x||^4 + t^2)^{n/2}}
$$
is a fundamental solution for $-\cL$, where $\cL$ denotes the sub-Laplacian $\sum_{j=1}^{2n} X_j^2$. The corresponding norm is a multiple of the {\it Kor\'anyi norm}
$$
\boldN(x,t) = (||x||^4 + t^2)^{1/4}.
$$
In this example, $\boldN$ is actually a norm, not just a quasinorm.
\end{example}

\subsection{Nonlinear potential theory in Carnot groups}

Let $\G$ be an arbitrary Carnot group. For $1<p<\infty$ the {\it $p$-Laplacian} in $\G$ is defined to be
$$
\cL_p f := \diver_0 ( ||\nabla_0 f||^{p-2} \, \nabla_0 f )
$$
where $\nabla_0 f = (X_1f,\ldots,X_mf)$ and $\diver_0(\sum_{j=1}^m a_j X_j) = X_j(a_j)$ denote the horizontal gradient of a real-valued function $f$ and the horizontal divergence of a horizontal vector field, respectively. When $p=\infty$ we use instead the {\it $\infty$-Laplacian} $\cL_\infty f$, which arises as a suitable limit of renormalized $p$-Laplacians as $p\to\infty$. See \eqref{eq:horiz-infty-Laplacian} for the definition of $\cL_\infty$.

In $\Heis^n$ it is known by work of Capogna, Danielli and Garofalo \cite{cdg:carnot} (see also Heinonen and Holopainen \cite{hh:carnot} for the case $p=Q$) that for each $1<p<\infty$, there exists a constant $c(n,p)>0$ so that the function
$$
u_p(x,t) = \begin{cases}
c(n,p) \, \boldN(x,t)^{(p-Q)/(p-1)}, &p\ne Q, \\
c(n,Q) \, \log(1/\boldN(x,t)), &p=Q
\end{cases}
$$
is a fundamental solution for $-\cL_p$. Here $\boldN$ denotes the Kor\'anyi norm in $\Heis^n$ as defined in Example \ref{ex:Folland-Heisenberg}. Since $\cL_p$ is nonlinear, the notion of fundamental solution must be understood appropriately. The preceding assertion means that the representation formula
$$
\varphi(\te) = \int_{\Heis^n} ||\nabla_0 u_p||_0^{p-2} \, \langle \nabla_0 u_p , \nabla_0 \varphi \rangle_0 \, d\mu
$$
holds for all $\varphi \in C^\infty_0(\Heis^n)$. Furthermore, $\boldN$ itself is $\infty$-harmonic in the complement of the origin:
$$
\cL_\infty \boldN = 0 \qquad \mbox{in $\Heis^n \setminus \{\te\}$.}
$$
Kaplan \cite{kap:h-type} introduced a more general class of step two Carnot groups, known as {\it H-type groups} (or {\it Heisenberg-type groups}). All of the Heisenberg groups are H-type groups, but there exist H-type groups whose center has arbitrarily large dimension. All of the preceding discussion (including the existence of explicit fundamental solutions to the $p$-Laplacians) extends to all H-type groups.

\subsection{Polar coordinates in Carnot groups}

Let $\G$ be an arbitrary Carnot group equipped with a fixed homogeneous norm $|\cdot|$. Denote by $S := \{ \tg \in \G \, : \, |\tg| = 1 \}$ the unit sphere for this norm. It is known (see Proposition 1.15 in \cite{fs:hardy}) that there is a polar coordinate integration formula for locally integrable functions on $\G$ adapted to the norm $||\cdot||$ and the dilation semigroup $(\delta_\lambda)$. More precisely, there exists a Radon measure $\tilde\sigma$ on $S$ so that
\begin{equation}\label{eq:FS-polar-coordinates}
\int_{\G} f(\tg) \, d\mu(\tg) = \int_S \int_0^\infty f(\delta_\lambda(\ta)) \, \lambda^{Q-1} \, d\lambda \, d\tilde\sigma(\ta)
\end{equation}
for all $f \in L^1(\G)$. Equation \eqref{eq:FS-polar-coordinates} is useful for computing explicit integrals of radial functions on $\G$. However, this type of polar coordinate formula has an important drawback: the dilation curves $\lambda \mapsto \delta_\lambda(\ta)$, $\ta \in S$, are typically not horizontal (and hence rectifiable). This makes \eqref{eq:FS-polar-coordinates} useless for purposes such as computing the capacities of spherical ring domains, or identifying extremal functions for certain variational problems.

In the Heisenberg group $\Heis^n$, there exists a system of {\it horizontal polar coordinates}. Kor\'anyi and Reimann \cite{kr:rings} were the first to introduce this system; they used it to compute the conformal capacities of ring domains, with concomitant implications for the regularity theory of quasiconformal maps. Let $\boldN$ be the Kor\'anyi norm in $\Heis^n$ and denote by $S$ the unit $\boldN$-sphere. Let $Z = \{(0,t):t \in \R\}$ denote the vertical axis in $\Heis^n$; note that $Z$ is a $\mu$-null set. For $\ta = (z,t) \in S \setminus Z$ and $s>0$, define
$$
\gamma_\ta(s) = (sze^{-\bi\alpha\log(s)},s^2t), \qquad \mbox{where $\alpha = t/|z|^2$.}
$$
The curves $\gamma_\ta$, $\ta \in S \setminus Z$, are horizontal, constant speed curves; the union of the images of all of these curves is precisely $\Heis^n \setminus Z$. More generally, for $\tx = (z,t) \in \Heis^n \setminus Z$ and $s>0$ define
$$
\varphi(s,\tx) = (sze^{-\bi\alpha\log(s)},s^2t)
$$
where $\alpha$ is as before. Then $\varphi:(0,\infty) \times \Heis^n \setminus Z \to \Heis^n \setminus Z$ defines a flow whose flowlines are precisely the curves $\gamma_\ta$, $\ta \in S \setminus Z$. Furthermore, there exists a unique Radon measure $\sigma$ on $S$ so that
\begin{equation}\label{eq:KR-polar-coordinates}
\int_{\Heis^n} f(\tx) \, d\mu(\tx) = \int_S \int_0^\infty f(\varphi(s,\ta)) \, s^{2n+1} \, ds \, d\sigma(\ta) \quad \mbox{for all $f \in L^1(\G)$.}
\end{equation}

\section{Polarizable groups}\label{subsec:polarizable}

The notion of {\it polarizable Carnot group} was introduced in \cite{bt:polar} in an attempt to understand the precise context in which the aforementioned features of Heisenberg (and more generally, H-type) sub-Riemannian geometry and analysis reside.

\begin{definition}
A Carnot group $\G$ is {\it polarizable} if 
$$
\cL_\infty \boldN_u=0
$$
in the complement of $\te$ in $\G$. Here $\boldN_u$ denotes Folland's homogeneous quasi-norm on $\G$.
\end{definition}

It is clear that the Heisenberg groups (more generally, the H-type groups) are polarizable. One of the main results of \cite{bt:polar} was the following theorem.

\begin{theorem}[Polarizable Carnot groups admit horizontal polar coordinates]\label{th:pCg-hpc}
Let $\G$ be a polarizable Carnot group with associated homogeneous quasi-norm $\boldN_u$. Let $S := \{ \tg \in \G \, : \, \boldN_u(\tg) = 1 \}$ be the unit $\boldN_u$-sphere. Then there exists a $\mu$-null set $Z \subset \G$ and a foliation
$$
\G \setminus Z = \coprod_{\ta \in S \setminus Z} \gamma_\ta,
$$
where, for each $\ta \in S \setminus Z$, $\gamma_\ta:(0,\infty) \to \G$ is a horizontal curve with $\gamma_\ta(1) = \ta$ and $\lim_{s \to 0}\gamma_\ta(s) = \te$. Moreover, the following conclusions hold for all $\ta \in S \setminus Z$:
\begin{itemize}
\item[(i)] $\boldN_u(\gamma_\ta(s)) = s$ for all $s>0$,
\item[(ii)] $\dot\gamma_\ta(s)$ is a multiple of $\nabla_0\boldN_u(\gamma_\ta(s))$ for all $s>0$, and
\item[(iii)] the curve $\gamma_\ta$ has constant speed, i.e., $||\dot\gamma_\ta(s)||_0$ is independent of $s$.
\end{itemize}
Finally, there exists a unique Radon measure $\sigma$ on $S$ so that
$$
\int_\G f(\tg) \, d\tg = \int_S \int_0^\infty f(\gamma_\ta(s)) \, s^{Q-1} \, ds \, d\sigma(\ta)
$$
for all $f \in L^1(\G)$. 
\end{theorem}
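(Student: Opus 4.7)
My plan is to construct the foliation as the integral curves of the horizontal vector field $V := \nabla_0 \boldN_u/\|\nabla_0 \boldN_u\|_0^2$, which is smooth on the open set $\G \setminus Z$, where $Z$ is the union of $\{\te\}$ with the set where $\nabla_0 \boldN_u$ vanishes. A preparatory step is to verify that $Z$ has $\mu$-measure zero. Since $u$ is $C^\infty$ and $\delta_\lambda$-homogeneous off $\te$, the critical set of $\nabla_0 \boldN_u$ is a closed, $\delta_\lambda$-invariant subset of $\G \setminus \{\te\}$; a horizontal-gradient-vanishing set of positive measure would be a cone on which $\boldN_u$ is locally constant along horizontal directions, contradicting the growth $\boldN_u(\delta_\lambda \tg) = \lambda \boldN_u(\tg)$.

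The polarizability hypothesis enters in exactly one spot. Because $V(\boldN_u) = \langle V, \nabla_0 \boldN_u\rangle_0 = 1$, along any integral curve $\gamma(s) = \phi_{s-1}(\ta)$ of $V$ (with $\phi_t$ its flow and $\gamma(1) = \ta \in S \setminus Z$) one has $\boldN_u(\gamma(s)) = s$, which is property (i). Property (ii) is automatic because $\dot\gamma = V \circ \gamma$ is a positive multiple of $\nabla_0 \boldN_u \circ \gamma$. For property (iii), I compute
$$\frac{d}{ds}\|\nabla_0 \boldN_u(\gamma(s))\|_0^2 = \bigl\langle \nabla_0(\|\nabla_0 \boldN_u\|_0^2), V \bigr\rangle_0 \!\circ\! \gamma = \frac{2\,\cL_\infty \boldN_u(\gamma(s))}{\|\nabla_0 \boldN_u(\gamma(s))\|_0^2} = 0,$$
so $\|\nabla_0 \boldN_u\|_0$ is constant along $\gamma$, and hence $\|\dot\gamma\|_0 = 1/\|\nabla_0 \boldN_u \circ \gamma\|_0$ is as well. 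This constant-speed estimate prevents finite-time blow-up, so each $\gamma_\ta$ is defined on $(0,\infty)$ and satisfies $\gamma_\ta(s) \to \te$ as $s \to 0^+$. The uniqueness of ODE solutions, together with the bijection between $\G \setminus Z$ and $\{(\ta, s) : \ta \in S \setminus Z, s > 0\}$ afforded by $\tg \mapsto (\text{flow-line label},\boldN_u(\tg))$, yields the foliation $\G \setminus Z = \coprod_{\ta \in S \setminus Z} \gamma_\ta$.

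Finally, I would derive the integration formula by analyzing the smooth bijection $\Phi(\ta, s) := \gamma_\ta(s)$ from $(S \setminus Z) \times (0,\infty)$ to $\G \setminus Z$. Pulling $\mu$ back, $\Phi^*\mu$ is absolutely continuous with respect to the product of Hausdorff measure on $S$ and Lebesgue measure on $(0,\infty)$, with some density $J(\ta,s)$. The degree-$0$ homogeneity of $V$ gives flow equivariance $\delta_\lambda \circ \phi_t = \phi_{\lambda t} \circ \delta_\lambda$; combined with $\delta_\lambda^*\mu = \lambda^Q \mu$ and the identity $\boldN_u \circ \gamma_\ta(s) = s$ (which forces the level sets $\{\boldN_u = s\}$ to be images of $S$ under the time-$(s-1)$ flow), this rescaling relation forces $J$ to factor as $J(\ta,s) = s^{Q-1} j(\ta)$, and the measure $\sigma := j\, d\cH^{N-1}|_S$ on $S$ is the desired Radon measure. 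Uniqueness follows because, evaluated on cylinders $A \times (a,b)$ with $A \subset S \setminus Z$, the formula recovers $\sigma(A)$ by differentiation in $b$.

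The hardest step is the Jacobian factorization. The dilation action, transported to $(S \setminus Z) \times (0,\infty)$ via $\Phi^{-1}$, acts on both coordinates—the angular coordinate via a nonlinear map $T_\lambda: \ta \mapsto R(\delta_\lambda \ta)$, where $R$ projects a point back along its flow line to $S$—so one must verify $(T_\lambda)_*\sigma = \sigma$ for all $\lambda > 0$. An attractive alternative route is to start from the Folland--Stein polar measure $\tilde\sigma$ and establish $(T_\lambda)_*\tilde\sigma = \tilde\sigma$ directly (noting $T_1 = \mathrm{id}$), which would identify $\sigma = \tilde\sigma$ explicitly.
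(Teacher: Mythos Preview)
The paper does not actually prove this theorem; it quotes it from \cite{bt:polar} and records only the by-product \eqref{eq:speed}. However, Section~\ref{sec:proof1} of the present paper runs the argument of \cite{bt:polar} \emph{in reverse}, so one can read off what the original proof does and compare it with your outline.

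Your construction of the curves and the verification of (i)--(iii) are exactly the intended argument: the radial flow is generated by $V=\nabla_0\boldN_u/\|\nabla_0\boldN_u\|_0^2$, item (i) follows from $V(\boldN_u)=1$, item (ii) is tautological, and the polarizability hypothesis $\cL_\infty\boldN_u=0$ is used precisely to force $\|\nabla_0\boldN_u\|_0$ to be constant along flow lines, yielding (iii) and global existence. This matches \cite{bt:polar} and the speed formula \eqref{eq:speed} recorded here. (Your argument that $Z$ is $\mu$-null is a bit loose; the standard route is to note that $u$, hence $\|\nabla_0\boldN_u\|_0^2$, is real-analytic off $\te$ and not identically zero, so its zero set is null.)

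Where your proposal diverges from the paper is the integration formula. You attempt to obtain the Jacobian factorization $J(\ta,s)=s^{Q-1}j(\ta)$ from dilation equivariance of the flow, and you correctly identify the difficulty: in polar coordinates the dilation acts as $(\ta,s)\mapsto (T_\lambda\ta,\lambda s)$ with a nontrivial angular twist $T_\lambda$, so the factorization is not immediate and you leave it unresolved. The route in \cite{bt:polar}, visible in reverse in Section~\ref{sec:proof1}, avoids this entirely. One computes directly, using $\cL(\boldN_u^{2-Q})=0$ and $\cL_\infty\boldN_u=0$, that
\[
\diver_0\!\left(\frac{\boldN_u}{\|\nabla_0\boldN_u\|_0^2}\,\nabla_0\boldN_u\right)=Q
\]
(this is \eqref{eq:1} and \eqref{eq:2} combined), whence the generating field $F(\tx,s)=\tfrac{\boldN_u(\tx)}{s}\,V(\tx)$ of the flow $\varphi(s,\tg)=\gamma_\ta(st)$ has $\diver_0 F=Q/s$. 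Liouville's formula then gives $\det D_\tg\varphi(s,\tg)=s^Q$ outright, and the polar integration formula follows by the change of variables $\tg=\varphi(s,\ta)$ with $\ta\in S$. This is both shorter and complete; I would recommend replacing your dilation argument with it.
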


\noindent Here and henceforth we abbreviate $d\tg = d\mu(\tg)$ for integration with respect to Haar measure in $\G$. For the purposes of later discussion, we observe that the proof of Theorem \ref{th:pCg-hpc} in \cite{bt:polar} yields the following formula for the speed of the polar curve $\gamma_\ta$:
\begin{equation}\label{eq:speed}
||\dot\gamma_\ta(s)||_0 = \frac1{||\nabla_0\boldN_u(\ta)||_0}
\end{equation}
for all $s>0$ and all $\ta \in S \setminus Z$.

\smallskip

To further clarify the relationship between these various concepts, we introduce the following definitions.

\begin{definition}[Singular solutions to $p$-Laplacians]\label{def:p-lap-sing}
A Carnot group $\G$ {\it admits a coherent family of singular solutions to the $p$-Laplacians} if, for each $1<p\le\infty$, the function
$$
u_p(\tg) = \begin{cases}
\boldN_u(\tg)^{(p-Q)/(p-1)}, &p\ne Q, \\
\log(1/\boldN_u(\tg)), &p=Q,
\end{cases}
$$
satisfies $\cL_p u_p=0$ in the complement of the origin $0 \in \G$. Here we interpret $u_\infty(\tg) = \boldN_u(\tg)$.
\end{definition}

\begin{definition}[Horizontal polar coordinates]\label{def:hpc}
Let $\G$ be a Carnot group equipped with Haar measure $\mu$ and a fixed quasi-norm $|\cdot|$, and let $S = \{ \tg \in \G : |\tg| = 1\}$ be the unit sphere. Assume that the function $|\cdot|:\G\to\R$ is horizontally $C^1$ in the complement of $0$. We say that $\G$ {\it admits horizontal polar coordinates with respect to $|\cdot|$} if there exists a $\mu$-null set $Z \subset \G$ and a foliation
$$
\G \setminus Z = \coprod_{\ta \in S \setminus Z} \gamma_\ta,
$$
where, for each $\ta \in S \setminus Z$, $\gamma_\ta:(0,\infty) \to \G$ is a horizontal curve with $\gamma_\ta(1) = \ta$. Moreover, the following conclusions hold for all $\ta \in S \setminus Z$:
\begin{itemize}
\item[(i)] $|\gamma_\ta(s)| = s$ for all $s>0$,
\item[(ii)] $\dot\gamma_\ta(s)$ is a multiple of $(\nabla_0|\cdot|)(\gamma_\ta(s))$ for all $s>0$, and
\item[(iii)] $||\dot\gamma_\ta(s)||_0$ is independent of $s$.
\end{itemize}
Furthermore, there exists a unique Radon measure $\sigma$ on $S$ so that
$$
\int_\G f(\tg) \, d\tg = \int_S \int_0^\infty f(\gamma_\ta(s)) \, s^{Q-1} \, ds \, d\sigma(\ta)
$$
for all $f \in L^1(\G)$.
\end{definition}

Here we say that a function $f:\G \to \R$ is {\bf horizontally $C^1$} if all first-order horizontal derivatives $X_j f$, $j=1,\ldots,m$, exist and are continuous.

\begin{remark}
We do not know whether all three conditions (i)--(iii) in Definition \ref{def:hpc} are necessary. In particular, it would be interesting to know whether condition (iii) can be derived as a consequence of the remaining assumptions in that definition.
\end{remark}

\section{Proof of Theorem \ref{th:main}}\label{sec:proof1}

As mentioned above, it suffices to prove the implication (iii) $\Rightarrow$ (i). Let $\G$ be a Carnot group of homogeneous dimension $Q>2$, let $\boldN_u$ denote Folland's homogeneous quasi-norm on $\G$. We assume that $\G$ admits horizontal polar coordinates with respect to $\boldN_u$, and let $Z$ be the associated exceptional set as indicated in Definition \ref{def:hpc}. We begin by defining a flow on $\G\setminus Z$ associated to the polar coordinate decomposition. We will then observe that the governing ODE for this flow, coupled with the polar coordinate integration formula, yields a divergence-form PDE satisfied by $\boldN_u$. Using this PDE together with the fact that $\boldN_u^{2-Q}$ is $2$-harmonic outside $0$, we deduce that $\boldN_u$ itself is $\infty$-harmonic outside $0$.

To this end, we define a function
$$
\varphi:(0,\infty) \times \G \setminus Z \longrightarrow \G \setminus Z
$$
as follows. Let $\tg \in \G \setminus Z$ and write $\tg = \gamma_\ta(t)$ for some unique choice of $\ta \in S \setminus Z$ and $t>0$. Note that $\boldN_u(\tg) = t$. For $s>0$ define
$$
\varphi(s,\tg) = \gamma_\ta(st).
$$
It is clear that $\varphi(1,\tg) = \tg$ for all $g \in \G \setminus Z$.

\begin{lemma}\label{lem:phi-props}
The map $\varphi$ satisfies the following properties:
\begin{itemize}
\item[(1)] $\varphi(s_1,\varphi(s_2,\tg)) = \varphi(s_1s_2,\tg)$ for all $\tg \in \G \setminus Z$ and $s_1,s_2>0$.
\item[(2)] $\boldN_u(\varphi(s,\tg)) = s \, \boldN_u(\tg)$ for all $\tg \in \G \setminus Z$ and $s>0$.
\item[(3)] The ODE
$$
\frac{\partial\varphi}{\partial s}(s,\tg) = \frac{\boldN_u(\varphi(s,\tg))}{s} \, \frac{\nabla_0\boldN_u(\varphi(s,\tg))}{||\nabla_0\boldN_u(\varphi(s,\tg))||_0^2}
$$
is satisfied in $(0,\infty) \times \G \setminus Z$.
\end{itemize}
\end{lemma}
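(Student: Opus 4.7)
The plan is to unwind everything back to the definition $\varphi(s,\tg) = \gamma_\ta(st)$, where $\tg = \gamma_\ta(t)$ is the unique decomposition coming from the horizontal polar coordinate foliation (so $t = \boldN_u(\tg)$ by property (i) of Definition~\ref{def:hpc}). Parts (1) and (2) are then essentially bookkeeping, and part (3) is the only substantive computation; it comes down to differentiating the identity $\boldN_u(\gamma_\ta(r)) = r$ along the curve.

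For part (2), property (i) in Definition~\ref{def:hpc} gives $\boldN_u(\varphi(s,\tg)) = \boldN_u(\gamma_\ta(st)) = st = s\,\boldN_u(\tg)$ directly. For part (1), the point $\varphi(s_2,\tg) = \gamma_\ta(s_2t)$ lies on the same flowline $\gamma_\ta$, and by part (2) its $\boldN_u$-value equals $s_2 t$; hence its polar-coordinate decomposition uses the same $\ta$ with parameter $s_2 t$, so $\varphi(s_1,\varphi(s_2,\tg)) = \gamma_\ta(s_1 \cdot s_2 t) = \varphi(s_1 s_2,\tg)$.

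For part (3), the chain rule gives $\frac{\partial\varphi}{\partial s}(s,\tg) = t\,\dot\gamma_\ta(st)$. By property (ii) of Definition~\ref{def:hpc}, $\dot\gamma_\ta(r)$ is a scalar multiple of $\nabla_0\boldN_u(\gamma_\ta(r))$, say $\dot\gamma_\ta(r) = c(r)\,\nabla_0\boldN_u(\gamma_\ta(r))$. To pin down $c(r)$, I would differentiate the identity $\boldN_u(\gamma_\ta(r)) = r$ from property (i): the usual chain rule, combined with horizontality of $\dot\gamma_\ta$, yields
\begin{equation*}
1 = \frac{d}{dr}\boldN_u(\gamma_\ta(r)) = \langle \nabla_0\boldN_u(\gamma_\ta(r)),\dot\gamma_\ta(r)\rangle_0 = c(r)\,\|\nabla_0\boldN_u(\gamma_\ta(r))\|_0^2,
\end{equation*}
so $c(r) = 1/\|\nabla_0\boldN_u(\gamma_\ta(r))\|_0^2$. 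Substituting $r=st$, using $\varphi(s,\tg) = \gamma_\ta(st)$, and replacing $t = \boldN_u(\tg) = \boldN_u(\varphi(s,\tg))/s$ via part~(2), yields exactly the asserted ODE.

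The only delicate point I anticipate is the application of the chain rule to $r\mapsto \boldN_u(\gamma_\ta(r))$: one needs that $\boldN_u$ is horizontally $C^1$ away from $\te$ (which is built into the hypotheses of Definition~\ref{def:hpc}) and one must justify writing the Euclidean chain rule as a horizontal inner product against $\nabla_0\boldN_u$. This is legitimate because $\dot\gamma_\ta(r)$ is a horizontal tangent vector, so its pairing with the differential $d\boldN_u$ depends only on the horizontal components of $d\boldN_u$, i.e., on $\nabla_0\boldN_u$. No extra regularity of the individual maps $\ta\mapsto\gamma_\ta$ is needed; the argument is carried out one flowline at a time.
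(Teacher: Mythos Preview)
Your proof is correct and follows essentially the same route as the paper: both arguments unwind the definition $\varphi(s,\tg)=\gamma_\ta(st)$, use property~(i) for part~(2), and for part~(3) obtain $\tfrac{\partial\varphi}{\partial s}=t\,\dot\gamma_\ta(st)$ and pin down the scalar multiplier by differentiating the identity $\boldN_u(\gamma_\ta(r))=r$ (equivalently, the identity in part~(2)) via the horizontal chain rule. The only cosmetic difference is that the paper differentiates $\boldN_u(\varphi(s,\tg))=s\,\boldN_u(\tg)$ in $s$ directly rather than first isolating $c(r)$ along the curve, but this is the same computation.
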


\begin{proof}
(1) is clear from the definition. From (1) we conclude that $\varphi$ defines a flow on $\G \setminus Z$, whose flowlines are precisely the curves $\gamma_\ta$, $\ta \in S \setminus Z$. To see why (2) holds, we write $\tg = \gamma_\ta(t)$ and compute
$$
\boldN_u(\varphi(s,\tg)) = \boldN_u(\gamma_\ta(st)) = st = s \, \boldN_u(\tg).
$$
We turn to item (3). Again, we fix $\tg =\gamma_\ta(s) \in \G \setminus Z$. Differentiating $\varphi(s,\tg)$ with respect to $s$ gives
$$
\frac{\partial\varphi}{\partial s}(s,\tg) = t \, \dot\gamma_\ta(st)
$$
and we conclude from Axiom (ii) in Definition \ref{def:hpc} that $\tfrac{\partial\varphi}{\partial s}(s,\tg)$ is a multiple of $\nabla_0\boldN_u(\varphi(s,\tg))$. Differentiating the identity in (2) with respect to $s$ gives
$$
\langle \nabla_0\boldN_u(\varphi(s,\tg)),\frac{\partial\varphi}{\partial s}(s,\tg) \rangle_0 = \frac{\partial}{\partial s} \boldN_u(\varphi(s,\tg)) = \boldN_u(\tg) = \frac{\boldN_u(\varphi(s,\tg))}{s}.
$$
Since $\tfrac{\partial\varphi}{\partial s}(s,\tg)$ is a multiple of $\nabla_0\boldN_u(\varphi(s,\tg))$, we conclude that
$$
\frac{\partial\varphi}{\partial s}(s,\tg) = \frac{\boldN_u(\varphi(s,\tg))}{s} \, \frac{\nabla_0 \boldN_u(\varphi(s,\tg))}{||\nabla_0 \boldN_u(\varphi(s,\tg))||_0^2}
$$
as desired.
\end{proof}

\begin{lemma}\label{lem:vol}
The identity
$$
\det D_\th \varphi(t,\th) = t^Q
$$
holds for all $t>0$ and all $\th \in \G \setminus Z$.
\end{lemma}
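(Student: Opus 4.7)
The plan is to deduce the Jacobian identity from the polar coordinate integration formula together with the classical change of variables theorem. The crux is the volume-scaling identity
\begin{equation*}
\mu(\varphi(t,A)) = t^Q \, \mu(A)
\end{equation*}
valid for every measurable $A \subset \G \setminus Z$ and every $t > 0$. To prove this identity I would expand $\mu(\varphi(t,A)) = \int_{\G\setminus Z} \chi_{\varphi(t,A)} \, d\tg$ via the polar coordinate formula of Definition~\ref{def:hpc}. Because the curves $\gamma_\ta$ foliate $\G\setminus Z$, combined with part (1) of Lemma~\ref{lem:phi-props}, one has $\gamma_\ta(s) \in \varphi(t,A)$ if and only if $\gamma_\ta(s/t) \in A$. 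Substituting $u = s/t$ in the radial integral then produces the overall factor $t^Q$ and reconstructs $\mu(A)$ in polar form.

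Once the volume scaling is in hand, I would apply the classical change of variables theorem to the bijection $\varphi(t,\cdot)\colon \G\setminus Z \to \G\setminus Z$ (whose inverse is $\varphi(1/t,\cdot)$) to write
\begin{equation*}
\mu(\varphi(t,A)) = \int_A |\det D_\th \varphi(t,\th)| \, d\th.
\end{equation*}
Equating the two expressions for $\mu(\varphi(t,A))$ and letting $A$ shrink around a point, the Lebesgue differentiation theorem gives $|\det D_\th \varphi(t,\th)| = t^Q$ almost everywhere, and then everywhere by continuity in $\th$. To remove the absolute value, note that $\varphi(1,\cdot) = \mathrm{Id}$, so $\det D_\th \varphi(1,\th) = 1 > 0$; since $t \mapsto \det D_\th \varphi(t,\th)$ is continuous and never vanishes (as $\varphi(t,\cdot)$ is invertible for each $t>0$), the sign is preserved for all $t>0$.

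The main obstacle is verifying that $\varphi(t,\cdot)$ possesses enough regularity in $\th$ for the change of variables theorem to apply in the first place. I would handle this by appealing to the ODE in part (3) of Lemma~\ref{lem:phi-props}: reparametrizing by $\tau = \log s$ converts it into the autonomous flow equation for the horizontal vector field $\boldN_u \, \nabla_0\boldN_u / ||\nabla_0\boldN_u||_0^2$. On $\G \setminus Z$ this vector field is smooth, since $\boldN_u$ is $C^\infty$ away from $\te$ by Folland's theorem together with H\"ormander hypoellipticity, and $\nabla_0\boldN_u$ is nonvanishing off $Z$ by the speed formula \eqref{eq:speed} combined with homogeneity of $\boldN_u$. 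Standard smooth dependence of ODE solutions on the initial data then supplies the required differentiability of $\varphi(t,\cdot)$, legitimizing the Jacobian computation and completing the argument.
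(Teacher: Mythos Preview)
Your proposal is correct and follows essentially the same route as the paper: both use the polar coordinate integration formula together with the flow property $\varphi(s_1,\varphi(s_2,\cdot)) = \varphi(s_1 s_2,\cdot)$ and the radial substitution $s \mapsto tu$ to identify $\int \tilde f(\th)\,\det D_\th\varphi(t,\th)\,d\th$ with $t^Q \int \tilde f(\th)\,d\th$ for arbitrary integrable $\tilde f$. You are more explicit than the paper about the regularity of $\varphi(t,\cdot)$ needed to invoke change of variables and about removing the absolute value from the Jacobian via the connectedness argument at $t=1$, but the underlying argument is the same.
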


\begin{proof}
We start from the polar coordinate integration formula
\begin{equation}\label{eq:pc}
\int_\G f(\tg) \, d\tg = \int_S \int_0^\infty f(\varphi(s,\ta)) \, s^{Q-1} \, ds \, d\sigma(\ta)
\end{equation}
valid for all integrable functions $f:\G\to\R$.

Fix $t>0$ and substitute $\tg = \varphi(t,\th)$ on the left hand side of \eqref{eq:pc} and $s = tu$ on the right hand side of \eqref{eq:pc}. We obtain
\begin{equation*}\begin{split}
\int_\G f(\varphi(t,\th)) \, \det D_\th \varphi(t,\th) \, d\th
&= \int_S \int_0^\infty f(\varphi(tu,\ta)) t^Q u^{Q-1} \, du \, d\sigma(\ta) \\
&= \int_S \int_0^\infty f(\varphi(t,\varphi(u,\ta))) u^{Q-1} \, du \, d\sigma(\ta) \, t^Q.
\end{split}\end{equation*}
Setting $\tilde{f}(\th) = f(\varphi(t,\th))$ we rewrite the preceding equation in the form
$$
\int_\G \tilde{f}(\th) \, \det D_h \varphi(t,\th) \, d\th = t^Q \int_S \int_0^\infty \tilde{f}(\varphi(u,\ta)) u^{Q-1} \, du \, d\sigma(\ta) = t^Q \int_\G \tilde{f}(\th) \, d\th.
$$
Since this holds for all integrable $\tilde{f}:\G\to\R$ we conclude that
$$
\det D_\th \varphi(t,\th) = t^Q
$$
for all $t>0$ and $\th \in \G \setminus Z$, as desired.
\end{proof}

As in \cite{bt:polar}, we now set
$$
F(\tx,s) = \frac{\boldN_u(\tx)}{s} \, \frac{\nabla_0 \boldN_u(\tx)}{||\nabla_0 \boldN_u(\tx)||_0^2}
$$
for $(s,\tx) \in (0,\infty) \times \R^N \setminus Z$ (where we identified $\G$ with $\R^N$). Lemma \ref{lem:phi-props}(3) tells us that
$$
\frac{\partial\varphi}{\partial s}(s,\tg) = F(\varphi(s,\tg),s).
$$
Differentiating in the spatial ($x$-)direction gives
$$
D_\tg \frac{\partial\varphi}{\partial s}(s,\tg) = D_\tx F(\varphi(s,\tg),s) \cdot D_\tg \varphi(s,\tg)
$$
and using the equality of mixed partial derivatives we deduce that
$$
\frac{\partial}{\partial s} D_\tg \varphi(s,\tg) = D_\tx F(\varphi(s,\tg),s) \cdot D_\tg \varphi(s,\tg).
$$
Taking determinants and using a standard determinantal identity we conclude that
$$
\frac{\partial}{\partial s} \det D_\tg \varphi(s,\tg) = \diver F(\varphi(s,\tg),s) \, \det D_\tg \varphi(s,\tg).
$$
By Proposition 7.1 in \cite{bt:polar}, the Euclidean divergence $\diver$ and the horizontal divergence $\diver_0$ agree on horizontal vector fields, so we can rewrite the above equation in the form
$$
\frac{\partial}{\partial s} \det D_\tg \varphi(s,\tg) = \diver_0 F(\varphi(s,\tg),s) \, \det D_\tg \varphi(s,\tg).
$$
By Lemma \ref{lem:vol}, the preceding equation simplifies to
$$
Q s^{Q-1} = \diver_0 F(\varphi(s,\tg),s) \, s^Q
$$
or
$$
\diver_0 F(\varphi(s,\tg),s) = \frac{Q}{s}.
$$
Setting $\tx = \varphi(s,\tg)$ we conclude that
$$
\diver_0 F(\tx,s) = \frac{Q}{s}
$$
for all $s>0$ and all $\tx \in \G \setminus Z$. Recalling the definition of the vector field $F$, we conclude that
\begin{equation}\label{eq:1}
\diver_0 \left( \frac{\boldN_u}{||\nabla_0\boldN_u||_0^2} \, \nabla_0 \boldN_u \right) = Q
\end{equation}
in $\G\setminus Z$.

Since $u = \boldN_u^{2-Q}$ and $\cL(u) = 0$ in the complement of $\te$, we deduce that
\begin{equation}\label{eq:2}
\cL(\boldN_u) = (Q-1) \frac{||\nabla_0 \boldN_u||_0^2}{\boldN_u}
\end{equation}
in the complement of $\te$. Combining \eqref{eq:1} and \eqref{eq:2} we conclude that
\begin{equation*}\begin{split}
Q &= \diver_0 \left( \frac{\boldN_u}{||\nabla_0 \boldN_u||_0^2} \, \nabla_0 \boldN_u \right) \\
&= \frac{\boldN_u}{||\nabla_0 \boldN_u||_0^2} \, \cL(\boldN_u) + 1 - \frac{\boldN_u \langle \nabla_0 ||\nabla_0 \boldN_u||_0^2 , \nabla_0 \boldN_u \rangle_0}{||\nabla_0 \boldN_u||_0^4} \\
&= (Q-1) + 1 - \frac{2\boldN_u \, \cL_\infty(\boldN_u)}{||\nabla_0\boldN_u||_0^4}
\end{split}\end{equation*}
from which it follows that $\cL_\infty \boldN_u = 0$. Note that this conclusion holds {\it a priori} in $\G \setminus Z$ (since that is where \eqref{eq:1} is valid), however, since $\boldN_u$ is $C^\infty$ in the complement of $0$, the desired conclusion extends by continuity to all of $\G \setminus \{\te\}$. This completes the proof of Theorem \ref{th:main}.

\section{Concluding remarks}

In their study \cite{kn:polar} of the asymptotic behavior of homogeneous Sobolev functions, Koskela and Nguyen introduce the notion of a (rectifiable) polar coordinate system on a metric measure space. The horizontal polar coordinates in a polarizable Carnot group provide one natural example of such a system. In this section, we clarify a small but important technical point regarding the validity of the axioms in \cite{kn:polar} for the horizontal polar coordinate system on a polarizable group. To wit, in \cite{kn:polar} it is assumed that all rectifiable curves are parameterized by arc length. However, in a polarizable Carnot group the polar curves are canonically parametrized at constant speed, but not necessarily unit speed. In order to bring these two descriptions into alignment, we reformulate the polar coordinate system in a polarizable group using arc length parametrized curves, and indicate how this yields a (strong, rectifiable) polar coordinate system in the sense of \cite{kn:polar}. We then restate the conclusions from \cite{kn:polar} which derive from the existence of such polar coordinates.

\subsection{Rectifiable polar coordinates in metric measure spaces}

We briefly review the main results of \cite{kn:polar}. The setting is a metric measure space $(X,d,\mu)$, where $\mu$ is a Borel regular measure on a metric space $(X,d)$. Following Heinonen and Koskela \cite{hk:quasi}, a Borel function $g:X \to [0,\infty]$ is said to be an {\it upper gradient} of a function $u:X \to \R$ if the inequality $|u(x)-u(y)| \le \int_\gamma g \, ds$ holds for each rectifiable curve $\gamma$ in $X$ with endpoints $x$ and $y$. The {\it homogeneous Sobolev space} $\dot{N}^{1,p}(X)$ consists of all locally integrable functions $u$ on $X$ that have an upper gradient $g$ in $L^p(X)$. 

A curve $\gamma:[0,\infty) \to X$ is called an {\it infinite curve} if $\gamma$ is not contained in any metric ball of $X$. The collection of all infinite curves in $X$ is denoted $\Gamma^\infty(X)$. The results of \cite{kn:polar} give conditions which imply the existence of the asymptotic limit $\lim_{s\to\infty} u(\gamma(s))$ for infinite curves $\gamma$ and homogeneous Sobolev functions $u \in \dot{N}^{1,p}(X)$. Several of these conditions are stated in terms of the $p$-modulus of $\Gamma^\infty$ and certain distinguished subfamilies of $\Gamma^\infty$.

To simplify the exposition, we restrict attention to Ahlfors $Q$-regular metric measure spaces $(X,d,\mu)$ which support the $p$-Poincar\'e inequality for some $1\le p<Q$. Note that each Carnot group, equipped with its Carnot--Carath\'eodory metric and Haar measure, satisfies these assumptions. We refer the reader to \cite{hk:quasi} or \cite[Chapter 14]{hkst:book} for further details. We emphasize that the results of \cite{kn:polar} are formulated in a more general setting. 

Let $\Sph$ be an abstract parametrizing set equipped with a Radon measure $\sigma$, and let $o \in X$ denote a fixed basepoint. Let $\Gamma^\infty_o(\Sph)$ denote a collection of rectifiable curves $\gamma_\xi \in \Gamma^\infty$, indexed by $\xi \in \Sph$, with $\gamma_\xi(0) = o$. As previously mentioned, all rectifiable curves are assumed to be parametrized by arc length. The data $(\Sph,\sigma,\Gamma^\infty_o(\Sph))$ is said to define a {\it strong (rectifiable) polar coordinate system} on $(X,d,\mu)$ if the identity
\begin{equation}\label{eq:rpc}
\int_X u(x) \, d\mu(x) = \int_{\Sph} \int_0^\infty u(\gamma_\xi(s)) \, s^{Q-1} \, ds \, d\sigma(\xi) 
\end{equation}
is satisfied for all $u \in L^1(X)$.

\subsection{Rectifiable vs.\ horizontal polar coordinates in polarizable Carnot groups}

Now assume that $\G$ is a polarizable Carnot group equipped with Carnot--Carath\'eodory metric $d_{cc}$ and Haar measure $\mu$. Then $(\G,d_{cc},\mu)$ is a metric measure space in the above sense. We select the identity element $\te$ as the basepoint for the polar coordinate system. Recall (see Theorem \ref{th:pCg-hpc}) that any such group $\G$ admits a horizontal polar coordinate system with respect to the quasi-norm $\boldN_u$ associated to Folland's fundamental solution $u$. Namely, there exists a $\mu$-null set $Z \subset \G$ so that $\G \setminus Z = \coprod_{\ta \in S \setminus Z} \gamma_\ta$
where $S = \{ \boldN_u = 1\}$ and, for each $\ta \in S \setminus Z$, $\gamma_\ta:(0,\infty)\to \G$ is a horizontal curve with $\gamma_\ta(1) = \ta$ and $\gamma_\ta(s) \to \te$ as $s \to 0$. Moreover,
\begin{equation}\label{eq:Ns}
\boldN_u(\gamma_\ta(s)) = s
\end{equation}
and
$$
||\dot\gamma_\ta(s)||_0 = \frac1{||\nabla_0 \boldN_u(\ta)||_0} =: \frac1{\lambda_\ta}
$$
for all $\ta \in S \setminus Z$. Thus the polar curve $\gamma_\ta$ has constant speed $\lambda_\ta^{-1}$. We reparametrize the polar curves by arc length. For each $\ta \in S \setminus Z$, define $\beta_\ta:(0,\infty)$ to be
$$
\beta_\ta(s) = \gamma_\ta(\lambda_\ta \, s).
$$
Then $||\dot\beta_\ta(s)||_0 = 1$ for all $s>0$. The polar coordinate integration formula now reads
\begin{equation}\label{eq:hpc2}
\int_\G f(\tg) \, d\tg = \int_S \int_0^\infty f(\beta_\ta(s)) \, s^{Q-1} \, ds \, d\tilde\sigma(\ta) 
\end{equation}
for all $f \in L^1(\G)$, where $d\tilde\sigma(\ta) = ||\nabla_0 \boldN_u(\ta)||_0^Q \, d\sigma(\ta)$. The expression in \eqref{eq:hpc2} now matches the form of the strong rectifiable polar coordinate formula \eqref{eq:rpc}, with data $(S,\tilde\sigma,\Gamma_\te^\infty(S))$. Note, however, that the formula in \eqref{eq:Ns} is less transparent in this presentation; with respect to the new parametrization for the polar curves we have
$$
\boldN_u(\beta_\ta(s)) = s \, ||\nabla_0 \boldN_u(\ta)||_0 \, .
$$

\begin{example}
In the Heisenberg group $\Heis^n$, the modified spherical measure $\tilde\sigma$ on $S = \{ \tx = (z,t) \in \Heis^n \, : \, \boldN(\tx) = 1\}$ is given by $d\tilde\sigma(z,t) = |z|^{2n+2} \, d\sigma(z,t)$, where $d\sigma$ is the measure appearing in \ref{eq:KR-polar-coordinates}.
\end{example}

The results of \cite{kn:polar} yield the following conclusions in the setting of polarizable Carnot groups. Recall that, if $(X,d,\mu)$ is $Q$-regular, then the $p$-modulus of $\Gamma^\infty$ is equal to zero for any $p\ge Q$, see \cite[Proposition 5.3.3]{hkst:book}.

\begin{corollary}[cf.\ Theorem 1.3 in \cite{kn:polar}]
Let $\G$ be a polarizable Carnot group with homogeneous dimension $Q$, and let $1\le p<Q$. Then
\begin{itemize}
\item[(1)] for any $u \in \dot{N}^{1,p}(\G)$ there exists a constant $c_u \in \R$ so that the limit $\lim_{s \to \infty} u(\beta_\ta(s))$ exists and equals $c_u$ for $\tilde\sigma$-a.e.\ $\ta \in S \setminus Z$, and
\item[(2)] for each $E \subset S \setminus Z$ with $\tilde\sigma(E)>0$, the $p$-modulus of $\Gamma_\te^\infty(E) := \{ \gamma_\ta \in \Gamma_\te^\infty(S) : \ta \in E\}$ is positive.
\end{itemize}
\end{corollary}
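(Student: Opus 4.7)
The plan is to derive the corollary by direct invocation of \cite[Theorem 1.3]{kn:polar}; the real work is verifying that its hypotheses apply verbatim to the polar coordinate system set up in the previous subsection. I would proceed in three steps: check the ambient assumptions, check the polar coordinate axioms, then read off the two conclusions.

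For the ambient structure, every Carnot group equipped with its Carnot--Carath\'eodory metric and Haar measure is Ahlfors $Q$-regular and supports a $(1,1)$-Poincar\'e inequality (see e.g.\ \cite{hkst:book}), hence a $(1,p)$-Poincar\'e inequality for each $1 \le p < Q$. Since the standing assumption is $Q > 2$, the range of admissible $p$ is nonempty. To check the polar coordinate axioms, fix $\ta \in S \setminus Z$. The curve $\beta_\ta$ is horizontal with $||\dot\beta_\ta(s)||_0 \equiv 1$, hence rectifiable in $d_{cc}$ and parametrized by arc length (the $d_{cc}$-length of a horizontal curve equals the integral of its horizontal speed). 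Moreover $\boldN_u(\beta_\ta(s)) = \lambda_\ta s \to \infty$, so $\beta_\ta$ eventually exits every $\boldN_u$-ball, and since any two homogeneous quasi-norms on $\G$ are bi-Lipschitz equivalent, it exits every $d_{cc}$-ball as well, placing $\beta_\ta \in \Gamma^\infty(\G)$. The integration identity \eqref{eq:hpc2} is then precisely the axiom \eqref{eq:rpc} for the data $(S, \tilde\sigma, \Gamma_\te^\infty(S))$ based at $\te$, so these data constitute a strong rectifiable polar coordinate system on $(\G, d_{cc}, \mu)$.

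Having matched the hypotheses, \cite[Theorem 1.3]{kn:polar} applies. Its asymptotic-limit assertion produces, for each $u \in \dot{N}^{1,p}(\G)$, a constant $c_u \in \R$ such that $\lim_{s\to\infty} u(\beta_\ta(s)) = c_u$ for $\tilde\sigma$-a.e.\ $\ta \in S \setminus Z$, which is (1); its positive-modulus statement, applied to the subfamily indexed by $E$, yields (2). The one bookkeeping point to note is that the rescaling $d\tilde\sigma = ||\nabla_0\boldN_u(\ta)||_0^Q \, d\sigma(\ta)$ renders $\sigma$ and $\tilde\sigma$ mutually absolutely continuous on $S \setminus Z$ (since $||\nabla_0\boldN_u||_0 > 0$ there, else the polar curve would have infinite speed by \eqref{eq:speed}), so the hypothesis $\tilde\sigma(E) > 0$ aligns with the positivity required in \cite{kn:polar}. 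I do not foresee any substantive obstacle: the real work was absorbed into the arc-length reparametrization carried out in the preceding subsection, and the corollary is essentially a dictionary translation.
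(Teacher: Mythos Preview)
Your proposal is correct and matches the paper's approach exactly: the paper does not give a separate proof of the corollary but simply sets up the arc-length reparametrized data $(S,\tilde\sigma,\Gamma_\te^\infty(S))$ in the preceding subsection, observes that \eqref{eq:hpc2} has the form \eqref{eq:rpc}, and then invokes \cite[Theorem~1.3]{kn:polar}. Your write-up makes explicit a couple of verifications the paper leaves implicit (that $\beta_\ta \in \Gamma^\infty$ and that $\sigma$ and $\tilde\sigma$ are mutually absolutely continuous on $S\setminus Z$), but these are routine and in the spirit of the paper's setup.
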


\bibliographystyle{acm}
\bibliography{biblio}
\end{document}